\theoremstyle{plain}
\newtheorem{theorem}{Theorem}[section]
\newtheorem{lemma}[theorem]{Lemma}
\theoremstyle{definition}
\newtheorem{definition}[theorem]{Definition}
\newcommand{\intav}[1]{\mathchoice {\mathop{\vrule width 6pt height 3 pt depth  -2.5pt
\kern -8pt \intop}\nolimits_{\kern -6pt#1}} {\mathop{\vrule width
5pt height 3  pt depth -2.6pt \kern -6pt \intop}\nolimits_{#1}}
{\mathop{\vrule width 5pt height 3 pt depth -2.6pt \kern -6pt
\intop}\nolimits_{#1}} {\mathop{\vrule width 5pt height 3 pt depth
-2.6pt \kern -6pt \intop}\nolimits_{#1}}}
\newcommand{\N}{\mathbb{N}}
\newcommand{\Rn}{\mathbb{R}^n}
\newlength{\hchng}
\newlength{\vchng}
\title{A geometric tangential approach to sharp regularity for degenerate evolution equations}
\author{Eduardo V. Teixeira \quad $\&$ \quad Jos\'e Miguel Urbano}
\date{}
\begin{document}

\maketitle

\begin{abstract}

That the weak solutions of degenerate parabolic pdes modelled on the inhomogeneous $p-$Laplace equation
$$u_t - \mathrm{div} \left( |\nabla u|^{p-2} \nabla u \right) = f \in L^{q,r} , \quad p>2$$ 
are $C^{0,\alpha}$, for some $\alpha \in (0,1)$, is known for almost 30 years. What was hitherto missing from the literature was a precise and sharp knowledge of the H\"older exponent $\alpha$ in terms of $p, q, r$ and the space dimension $n$. We show in this paper that 
$$\alpha = \frac{(pq-n)r-pq}{q[(p-1)r-(p-2)]} ,$$
using a method based on the notion of geometric tangential equations and the intrinsic scaling of the $p-$parabolic operator. The proofs are flexible enough to be of use in a number of other nonlinear evolution problems. 

\bigskip

\noindent {\bf Keywords:} {Degenerate parabolic equations; sharp H\"older regularity; tangential equations; intrinsic scaling.  \smallskip \\
\noindent 2000 {\it Mathematics Subject Classification.} 35K55, 35K65.}

\end{abstract}

\section{Introduction}

The understanding of the local behaviour of solutions to singular and degenerate parabolic equations has witnessed an impressive progress in the last three decades. At the heart of most developments lies a single unifying idea, namely that regularity results have to be interpreted in an intrinsic geometric configuration, a sort of signature to each particular \textit{pde}. The pioneering work of DiBenedetto \cite{DiB} was the starting point to a theory that has, in many aspects, reached its maturity (cf. \cite{DGV} and \cite{U} for recent accounts). 

A central aspect in this endeavour has always been the H\"older continuity of bounded weak solutions, which ultimately follows from Harnack type inequalities. Although powerful, this approach only provides qualitative estimates that depend solely on the structure of the equations and thus hold in a very general setting. The quest for precise, quantitative derivations of the H\"older exponent has hitherto eluded the community, the only exception being the two-dimensional result in \cite{IM} concerning $p-$harmonic functions. This type of quantitative information, apart from its own intrinsic value, plays an important role in the analysis of a number of qualitative issues for parabolic pdes, such as blow-up analysis, Liouville type results, free boundary problems, and so forth. 

The main goal of this paper is to fill this gap, bringing the theory to a new level of understanding. We show that weak solutions of degenerate $p-$parabolic equations whose prototype is 
\begin{equation} \label{eq_intro01}
	u_t - \mathrm{div} \left( |\nabla u|^{p-2} \nabla u \right) = f \in L^{q,r} , \quad p\ge 2
\end{equation}
are locally of class $C^{0,\alpha}$ in space, with
$$
	\alpha:= \frac{(pq-n)r-pq}{q[(p-1)r-(p-2)]},
$$
a precise and sharp expression for the H\"older exponent in terms of $p$, the integrability of the source and the space dimension $n$. We also show that $u$ is of class $C^{0,\frac{\alpha}{\theta}}$ in time, where $\theta$ is the $\alpha-$interpolation between $2$ and $p$. What makes the parabolic case more delicate to analyse is the inhomogeneity in the equation, the fact that it scales differently with respect to space and time. It is worth stressing that the integrability in time (respectively, in space) of the source affects the regularity in space (respectively, in time) of the solution. 

To highlight the extent to which our result is sharp, we project it into the state of the art of the theory. For the linear case $p=2$, we obtain
$$
	\alpha = 1 - \left( \frac{2}{r} + \frac{n}{q} -1 \right),
$$
which is the optimal H\"older exponent for the non-homogeneous heat equation, and is in accordance with estimates obtained by energy considerations.
When $p\rightarrow \infty$, we have $\alpha \rightarrow 1^-$, which gives an indication of the expected locally Lipschitz regularity for the case of the parabolic infinity-Laplacian. When the source $f$ is independent of time, or else bounded in time, that is $r=\infty$, we obtain
$$
	\alpha = \frac{pq-n}{q(p-1)} = \frac{p}{p-1} \cdot \frac{q-\frac{n}{p}}{q},
$$
which is exactly the optimal exponent obtained in \cite{T1} for the elliptic case. It might also be interesting to compare our optimal result with the estimates from \cite[Section 4]{M}, and also with the continuity estimates on $p-$parabolic obstacle problems from \cite{KMN}.

Within the general theory of $p-$parabolic equations, our result reveals a surprising feature. From the applied point of view, it is relevant to know what is the effect on the diffusion properties of the model as we dim the exponent $p$. Na\"ive physical interpretations could indicate that the higher the value of $p$, the less efficient should the diffusion properties of the $p-$parabolic operator turn out to be, \textit{i.e.}, one should expect a less efficient smoothness effect of the operator. For instance, this is verified in the sharp regularity estimate for $p-$harmonic functions in 2D \cite{IM}. On the contrary, our estimate implies that for $p-$parabolic inhomogeneous equations, the H\"older regularity theory improves as $p$ increases. In fact, a direct computation shows
$$
	\text{sign}\left ( \partial_p \alpha(p,n,q,r) \right) = \text{sign} \left ( q(2-r) + nr \right ) =+1,
$$
in view of standard assumptions on the integrability exponents of the source term.

Although regularity estimates for degenerate evolution equations have been successfully obtained in great generality (cf. \cite{KL}, \cite{AM}), explicit expressions for the H\"older exponent of continuity for weak solutions have only been known in the linear setting. For nonlinear equations, the classical tools from harmonic analysis, such as singular integrals, are precluded from being used and an entirely new approach is needed. The new estimates we obtain are striking in their simplicity but perhaps the most relevant contribution we offer is the technique employed. We develop a method based on the notion of geometric tangential equations, which explores the intrinsic scaling of the $p-$parabolic operator and the integrability of the forcing term. By means of  appropriate scaled iterative arguments, we show that at each inhomogeneous equation there is a universal tangential space formed by $C^{0,1}$ in space and $C^{0,\frac{1}{2}}$ in time functions. The method then imports such regularity back to the original equation, properly corrected through the scaling used to access the tangential space. The method is new to the field and robust enough to be adapted to other evolutionary problems, as well as to a number of other issues in the theory.

\section{Preliminary tools} 

Let $U \subset \Rn$ be open and bounded, and $T>0$. We consider the space-time domain $U_T=U\times (0,T)$. We work with the prototype inhomogeneous equation
\begin{equation} \label{p-Lap}
u_t - \mathrm{div} \left( |\nabla u|^{p-2} \nabla u \right) = f  \quad \textrm{in} \ \ U_T,
\end{equation}
with a source term $f \in L^{q,r} (U_T) \equiv L^r(0,T;L^q(U))$ satisfying 
\begin{equation} \label{borderline1}
\frac{1}{r}+\frac{n}{pq} <1
\end{equation}
and
\begin{equation}\label{borderline2}
\frac{2}{r}+\frac{n}{q} >1
\end{equation}

The first assumption is the standard minimal integrability condition that guarantees the existence of bounded weak solutions, while \eqref{borderline2} defines the borderline setting for optimal H\"older type estimates. For instance, when $r=\infty$,  conditions  \eqref{borderline1} and  \eqref{borderline2} enforce 
$$
	\dfrac{n}{p} < q < n,
$$
which corresponds to the known range of integrability required in the elliptic theory for local $C^{0,\alpha}$ estimates to be available. 

\medskip

We start with the definition of weak solution to \eqref{p-Lap}.

\begin{definition} \label{def ws} We say a function 
$$u\in C_\text{loc} \left( 0,T; L^2_\text{loc}(U) \right ) \cap L^p_\text{loc}\left(0,T; W^{1,p}_\text{loc}(U) \right )$$ 
is a weak solution to \eqref{p-Lap} if, for every compact $K \subset U$ and every subinterval $[t_1,t_2] \subset (0,T]$, there holds
$$
	\int_K u\varphi \, dx \Big|_{t_1}^{t_2} + \int_{t_1}^{t_2} \int_K \left\{ -u \varphi_t + |\nabla u|^{p-2} \nabla u \cdot \nabla \varphi \right\} \, dx dt = \int_{t_1}^{t_2} \int_K f \varphi \, dx dt,
$$
for all $\varphi \in H^1_\text{loc} \left(0,T; L^2(K) \right) \cap L^p_\text{loc} \left(0,T; W^{1,p}_0(K) \right)$.
\end{definition}
The following alternative definition makes use of the Steklov average of a function $v \in L^1(U_T)$, defined for $0<h<T$ by
$$v_h := \left\{
\begin{array}{ccl}
\frac{1}{h} \displaystyle \int_t^{t+h} v(\cdot,\tau) d\tau & {\rm if} & t \in (0,T-h]\\
& & \\
0 & {\rm if} & t \in (T-h, T],
\end{array} \right. $$
and circumvents the difficulties related to the low regularity in time. In fact, these difficulties are more of a technical nature since the time derivative $u_t$ is shown in \cite{Lindq} to be an element of a certain Lebesgue space. 

\begin{definition} \label{def ws steklov} We say a function 
$$u\in C_\text{loc} \left( 0,T; L^2_\text{loc}(U) \right ) \cap L^p_\text{loc}\left(0,T; W^{1,p}_\text{loc}(U) \right )$$ 
is a weak solution to \eqref{p-Lap} if, for every compact $K \subset U$ and every $0<t<T-h$, there holds
\begin{equation} \label{def_steklov}
	\int_{K \times\{t\}} \left\{ (u_h)_t \varphi + \left( |\nabla u|^{p-2} \nabla u \right)_h  \cdot \nabla \varphi \right\} \, dx = \int_{K \times\{t\}} f_h \varphi \, dx,
\end{equation}
for all $\varphi \in W^{1,p}_0(K)$.
\end{definition}

One key ingredient in our analysis is the following Caccioppoli-type energy estimate enjoyed by weak solutions of equation \eqref{p-Lap}.

\begin{lemma}[Caccioppoli estimate] \label{energy est} 
Let $u$ be a weak solution to \eqref{p-Lap}. Given $K  \times [t_1,t_2] \subset U \times (0,T]$, there exists a constant $C$, depending only on $n$, $p$, $K  \times [t_1,t_2]$ and $\|f\|_{L^{q,r}}$, such that
\begin{equation} \label{caccioppoli}
\sup_{t_1<t<t_2} \int_K u^2 \xi^p \, dx + \int_{t_1}^{t_2} \int_K \left| \nabla u \right|^p \xi^p \, dx dt \leq  \int_{t_1}^{t_2} \int_K |u|^p \left( \xi^p +\left| \nabla \xi \right|^p \right) \, dx dt  
\end{equation}
$$+ \int_{t_1}^{t_2} \int_K u^2 \xi^{p-1} \left| \xi_t \right| \, dx dt+ \| f \|_{q,r},
$$
for every $\xi \in {\mathcal C}_0^\infty \left( K \times (t_1, t_2)\right)$ such that $\xi \in [0,1]$.
\end{lemma}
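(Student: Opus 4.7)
The plan is to work with the Steklov-averaged formulation of Definition \ref{def ws steklov}, as this sidesteps the difficulty of not knowing that $u_t$ lies in the dual of the test-function space. I would select the admissible test function $\varphi = u_h\,\xi^p$, integrate the resulting pointwise-in-$t$ identity \eqref{def_steklov} over a subinterval $[t_1,t]$ with $t\in(t_1,t_2)$ arbitrary, and in the end let $h\downarrow 0$ using the standard $L^p_{\mathrm{loc}}$ convergence of Steklov averages of $u$, $\nabla u$, $|\nabla u|^{p-2}\nabla u$ and $f$. The supremum in $t$ on the left-hand side of \eqref{caccioppoli} will be captured by carrying the integration up to an arbitrary $t$ and then taking the sup.

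\textbf{Standard pieces.} The time term is handled through $(u_h)_t\,u_h = \tfrac12 (u_h^2)_t$ and an integration by parts, producing $\tfrac12\!\int_K u_h^2\xi^p\,dx\big|_{t_1}^t$ together with the error $\tfrac{p}{2}\int\!\!\int u_h^2\,\xi^{p-1}\xi_\tau\,dx\,d\tau$; the former provides the $\sup_t \int u^2\xi^p$ summand on the left, and the latter is absorbed into the $u^2\xi^{p-1}|\xi_t|$ term on the right. The diffusion term splits, after $h\to 0$, as
\[
\int\!\!\int |\nabla u|^p\xi^p\,dx\,d\tau \;+\; p\int\!\!\int |\nabla u|^{p-2}\nabla u\cdot\nabla\xi \; u\,\xi^{p-1}\,dx\,d\tau;
\]
the first is the second left-hand side summand, while the cross term is controlled by Young's inequality
\[
p\,|\nabla u|^{p-1}|u|\xi^{p-1}|\nabla\xi|\le \varepsilon|\nabla u|^p\xi^p + C_\varepsilon|u|^p|\nabla\xi|^p,
\]
so the $\varepsilon$-piece can be absorbed into the left and the $|u|^p|\nabla\xi|^p$-piece contributes to the first summand on the right of \eqref{caccioppoli}.

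\textbf{Main obstacle.} The delicate point is the source term $\int\!\!\int f\,u_h\,\xi^p$. Since $f$ only lives in $L^{q,r}$, Hölder in space and time only yields
\[
\left|\int\!\!\int f u_h\xi^p\right| \le \|f\|_{L^{q,r}}\,\bigl\|u_h\xi^p\bigr\|_{L^{r'}(L^{q'})}.
\]
This is precisely where the borderline integrability assumption \eqref{borderline1} is used: by the parabolic Gagliardo--Nirenberg--Sobolev embedding, the energy space $L^\infty(L^2)\cap L^p(W^{1,p})$ injects into $L^{r'}(L^{q'})$ as long as $\tfrac{1}{r}+\tfrac{n}{pq}<1$, so one can estimate $\|u_h\xi^p\|_{L^{r'}(L^{q'})}$ by a mixed norm depending on $\sup_t\int u^2\xi^p$ and $\int\!\!\int|\nabla u|^p\xi^p$. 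A further Young's inequality (with a small parameter in front of the energy quantities, and a tail controlled by a power of $\|f\|_{q,r}$ and a lower-order $|u|^p\xi^p$ contribution) allows the energy terms to be absorbed on the left. Since the lemma permits the constant $C$ to depend on $\|f\|_{L^{q,r}}$, the residue from Young's inequality is harmlessly lumped into the additive $\|f\|_{q,r}$ and the $|u|^p\xi^p$ term written in \eqref{caccioppoli}. Passing $h\downarrow 0$ and taking the supremum over $t\in(t_1,t_2)$ closes the argument.
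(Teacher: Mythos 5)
Your proposal is correct and follows essentially the same route as the paper, whose proof is only a one-line sketch (test \eqref{def_steklov} with $\varphi = u_h\xi^p$, integrate in time, pass to the limit $h\to 0$, apply Young's inequality). You have merely filled in the standard details, including the parabolic embedding of the energy space into $L^{r'}(L^{q'})$ under \eqref{borderline1} needed to absorb the source term, which is the intended treatment.
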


\begin{proof}
Choose $\varphi = u_h \xi^p$ as a test function in \eqref{def_steklov} and perform the usual combination of integrating in time, passing to the limit in $h \rightarrow 0$ and applying Young's inequality to derive the estimate. 
\end{proof}

We finally recall that, for a function belonging to $L^p (Q)$, its averaged norm is
$$\| v\|_{p, avg,Q} := \left( \, \intav{Q} |v|^p \, dx dt \right)^{1/p} = |Q|^{-1/p} \| v\|_{p,Q},$$
where, as usual, the integral average is defined by
$$
	\intav{A} \psi  = \dfrac{1}{|A|} \int_{A} \psi.
$$

\section{Sharp H\"older estimate}

We start by fixing universal constants, that depend only on the data. The intrinsic exponent to equation \eqref{p-Lap}, with $f\in L^{q,r}$, is
\begin{equation}\label{alpha}
	\alpha := \frac{(pq-n)r-pq}{q[(p-1)r-(p-2)]} = \frac{p\left( 1-\displaystyle \frac{1}{r}-\frac{n}{pq}\right)}{\left( \displaystyle\frac{2}{r} + \frac{n}{q}-1\right) + p\left( 1-\displaystyle\frac{1}{r}-\frac{n}{pq}\right)},
\end{equation}
which, in view of \eqref{borderline1} and \eqref{borderline2}, satisfies $0<\alpha<1$.  Next, let 
\begin{equation} \label{theta}
\theta := \alpha + p-(p-1)\alpha =  p - (p-2)\alpha = \alpha 2 + (1-\alpha) p.
\end{equation}
Clearly $2<\theta<p$, since $0 < \alpha < 1$. For such $\theta$, we define the intrinsic $\theta-$parabolic cylinder
$$G_\tau := \left( - \tau^{\theta}, 0 \right) \times B_\tau(0), \quad \tau >0.$$

We first establish a key compactness result that states that if the source term $f$ has a small norm in $L^{q,r}$, then a solution $u$ to \eqref{p-Lap} is close to a $p-$caloric function in an inner subdomain.

\begin{lemma}[Approximation to $p-$caloric functions] \label{compactness}
For every $\delta >0$, there exists $0< \epsilon \ll 1$, such that if $\|f\|_{L^{q,r}(G_1)} \leq \epsilon$ and $u$ is a local weak solution of \eqref{p-Lap} in $G_1$, with $\|u\|_{p,avg,G_1} \leq 1$, then there exists a $p-$caloric function $\phi$ in $G_{1/2}$, i.e.,  
\begin{equation} \label{p-Lap homogeneous}
\phi_t - \mathrm{div} \left( |\nabla \phi|^{p-2} \nabla \phi \right) = 0 \quad \textrm{in} \ \ G_{1/2},
\end{equation}
such that 
\begin{equation} \label{close}
	\| u-\phi \|_{p,avg,G_{1/2}}  \leq \delta.
\end{equation}
\end{lemma}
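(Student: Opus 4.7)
The plan is a standard compactness-by-contradiction argument, adapted to the intrinsic $\theta$-parabolic geometry. Suppose the conclusion fails. Then there exist $\delta_0 > 0$ and sequences $f_k$ with $\|f_k\|_{L^{q,r}(G_1)} \to 0$ and weak solutions $u_k$ of
\[
    (u_k)_t - \mathrm{div}\bigl( |\nabla u_k|^{p-2} \nabla u_k \bigr) = f_k \quad \text{in } G_1,
\]
with $\|u_k\|_{p,\mathrm{avg},G_1} \le 1$, such that
\[
    \|u_k - \phi\|_{p,\mathrm{avg},G_{1/2}} > \delta_0
\]
for every $p$-caloric function $\phi$ in $G_{1/2}$. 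The goal is to extract a subsequence converging to some $u_\infty$ and show $u_\infty$ is $p$-caloric, producing the contradiction.

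\textbf{Uniform estimates and weak compactness.} First I would apply the Caccioppoli estimate of Lemma \ref{energy est} on an intermediate cylinder $G_{3/4} \subset\subset G_1$, using a cut-off $\xi$ which is $1$ on $G_{1/2}$. Since $\|u_k\|_{p,\mathrm{avg},G_1} \le 1$ and $\|f_k\|_{q,r} \to 0$, this yields a uniform bound for $\nabla u_k$ in $L^p(G_{3/4})$ and for $u_k$ in $L^\infty(-\,(3/4)^\theta,0; L^2(B_{3/4}))$. Combining these with the equation, the time derivatives $(u_k)_t$ are uniformly bounded in a negative Sobolev space of the form $L^{p'}(-(3/4)^\theta, 0; W^{-1,p'}(B_{3/4})) + L^{q,r}$. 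An Aubin--Lions type compact embedding then delivers, along a subsequence, strong convergence $u_k \to u_\infty$ in $L^p(G_{1/2})$, together with weak convergence $\nabla u_k \rightharpoonup \nabla u_\infty$ in $L^p(G_{3/4})$ and weak-$\ast$ convergence of $|\nabla u_k|^{p-2}\nabla u_k$ to some vector field $\mathcal{A}$ in $L^{p'}$.

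\textbf{Identifying the limit as $p$-caloric.} The main obstacle is passing to the limit in the nonlinear flux to show $\mathcal{A} = |\nabla u_\infty|^{p-2}\nabla u_\infty$ a.e., which is equivalent to strong $L^p$ convergence of the gradients. The standard route is Minty's monotonicity trick: testing the equation for $u_k$ against $(u_k - u_\infty)\xi^p$ (via the Steklov formulation to handle the time derivative) and exploiting the monotonicity inequality
\[
    \bigl( |a|^{p-2}a - |b|^{p-2}b \bigr) \cdot (a-b) \ge c_p |a-b|^p, \qquad p \ge 2,
\]
one deduces $\nabla u_k \to \nabla u_\infty$ strongly in $L^p_{\mathrm{loc}}(G_{1/2})$. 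Since $f_k \to 0$ in $L^{q,r}$, the limit $u_\infty$ satisfies equation \eqref{p-Lap homogeneous} in $G_{1/2}$, i.e., it is $p$-caloric.

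\textbf{Contradiction.} Choosing $\phi = u_\infty$ gives $\|u_{k_j} - u_\infty\|_{p,\mathrm{avg},G_{1/2}} \to 0$ by the established strong convergence, contradicting the standing assumption $\|u_{k_j} - \phi\|_{p,\mathrm{avg},G_{1/2}} > \delta_0$ for every admissible $\phi$. The anticipated technical difficulty is twofold: first, legitimately integrating by parts in time requires working carefully at the level of Steklov averages as in Definition \ref{def ws steklov}; second, verifying that the limiting cylinder geometry $G_{1/2}$ (which involves the intrinsic exponent $\theta$) is compatible with the compactness embedding and the energy estimates, which amounts to a routine but careful choice of the intermediate cylinder and cut-off.
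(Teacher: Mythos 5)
Your proposal is correct and follows essentially the same route as the paper: a contradiction/compactness argument using the Caccioppoli estimate for uniform energy bounds, a control on the time derivative, an Aubin--Lions/Simon-type embedding for strong $L^p$ convergence, and passage to the limit in the equation. The only differences are cosmetic: the paper places $(u^j)_t$ in an actual Lebesgue space $L^{s,1}$ via Lindqvist's result rather than in a negative Sobolev space, and it leaves implicit the identification of the limit of the nonlinear flux, which you rightly spell out via Minty's monotonicity trick.
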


\begin{proof} Suppose, for the sake of contradiction, that the thesis of the lemma fails. That is, assume, for some $\delta_0>0$, that there exists a sequence  
$$(u^j)_j \in C_\text{loc} \left( -1,0; L^2_\text{loc}(B_1) \right) \cap L^p_\text{loc}\left(-1,0; W^{1,p}_\text{loc}(B_1) \right)$$
and a sequence $(f^j)_j \in L^{q,r}(G_1)$, such that
\begin{eqnarray}
	\label{comp eq01}  u^j_t - \mathrm{div} \left( |\nabla u^j |^{p-2} \nabla u^j \right) &=& f^j \ \text{ in }  G_1; \\
	\label{comp eq02} \|u^j\|_{p,avg,G_1} &\leq& 1; \\
	\label{comp eq03}  \|f^j\|_{L^{q,r}(G_1)} &\leq& \frac{1}{j}
\end{eqnarray}
but still, for any $j$ and any $p-$caloric function $\phi$ in $G_{1/2}$, 
\begin{equation} \label{comp eq04} 
\| u^j-\phi \|_{p,avg,G_{1/2}}  > \delta_0.
\end{equation} 

Fix a cutoff function $\xi \in C_0^\infty (G_1)$, such that $\xi \in [0,1]$, $\xi \equiv 1$ in $G_{1/2}$ and $\xi \equiv 0$ near $\partial_p G_1$. From the Caccioppoli estimate, using the notation
$$V(I\times U) = L^\infty \left(I; L^2 (U) \right) \cap L^p \left(I; W^{1,p}(U) \right),$$
we obtain
\begin{eqnarray*}
\| u^j \|_{V(G_{1/2})} & \leq & \sup_{-1<t<0} \int_{B_1} (u^j)^2 \xi^p \, dx + \int_{-1}^{0} \int_{B_1} \left| \nabla u^j \right|^p \xi^p \, dx dt \\
& \leq & \int_{-1}^{0} \int_{B_1} \left\{ |u^j|^p \left( \xi^p +\left| \nabla \xi \right|^p \right)  + (u^j)^2 \xi^{p-1} \left| \xi_t \right| \right\} dx dt+ \|f^j\|_{L^{q,r}(G_1)}\\
& \leq & c\, \|u^j\|^p_{p,avg,G_1} + c^\prime \|u^j\|^2_{2,avg,G_1} + 1/j\\
& \leq & c.
\end{eqnarray*}

A control of the time derivative, along the lines of \cite{Lindq} (see also \cite{AMS}), gives
$$\|u^j_t\|_{L^{s,1}(G_{1/2})} \leq c,$$
with $s=\min \left\{ q, \frac{p}{p-1}\right\} < p$.
We now use a classical compactness result (cf. \cite[Corollary 4]{Sim}), with
$$W^{1,p}  \hookrightarrow L^p \subset L^s,$$
to conclude that
$$u^j \longrightarrow \psi,$$
strongly in $L^p (G_{1/2})$, in addition to the weak convergence in $V(G_{1/2})$.

Passing to the limit in \eqref{comp eq01}, we find that
$$\psi_t - \mathrm{div} \left( |\nabla \psi|^{p-2} \nabla \psi \right) = 0 \quad \textrm{in} \ \ G_{1/2},$$
which contradicts \eqref{comp eq04}, for $j \gg 1$. The proof is complete.
\end{proof}

Next, by means of geometric iteration, we shall establish the optimal H\"older continuity for solutions to the heterogeneous $p-$parabolic equation \eqref{p-Lap}. Our approach explores the approximation by $p-$caloric functions, given by Lemma \ref{compactness}, and  the fact that $p-$caloric functions are \textit{universally} Lipschitz continuous in space and $C^{0, \frac{1}{2}}$ in time.  The following is the crucial first iterative step. 

\begin{lemma} \label{constant}
Let $0<\alpha<1$ be fixed. There exists $\epsilon >0$ and $0< \lambda \ll 1/2$, depending only on $p$, $n$ and $\alpha$, such that if $\|f\|_{L^{q,r}(G_{1})} \leq \epsilon$ and $u$ is a local weak solution of \eqref{p-Lap} in $G_{1}$, with $\|u\|_{p,avg,G_{1}}  \le 1$, then there exists a universally bounded constant $c_0$ such that 
\begin{equation} \label{supi}
\| u -c_0 \|_{p,avg,G_{\lambda}} \leq \lambda^\alpha.
\end{equation}
\end{lemma}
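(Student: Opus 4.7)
The plan is to approximate $u$ by a $p$-caloric function $\phi$ via Lemma~\ref{compactness}, and then to read off its value at the origin using the \emph{universal} interior regularity of $p$-caloric functions---Lipschitz-in-space and $C^{0,1/2}$-in-time. The choice of the intrinsic cylinder $G_\lambda$, with time radius $\lambda^{\theta}$, is exactly what makes these two regularities compatible with the target exponent $\alpha$. Indeed, the identity $\theta=p-(p-2)\alpha$ gives
$$\frac{\theta}{2}-\alpha \;=\; \frac{p(1-\alpha)}{2}\;>\;0,$$
so that $\alpha<\min\{1,\theta/2\}$, and both scales $\lambda$ and $\lambda^{\theta/2}$ dominate $\lambda^{\alpha}$ when $\lambda$ is small.

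First, I would fix $\lambda\ll 1/2$, depending only on $n,p,\alpha$, small enough that $L(\lambda+\lambda^{\theta/2})\le \tfrac12\lambda^{\alpha}$, where $L=L(n,p)$ is the universal constant supplied by the $p$-caloric regularity theory and yielding
$$|\phi(x,t)-\phi(0,0)|\;\le\; L\bigl(\lambda+\lambda^{\theta/2}\bigr),\qquad (x,t)\in G_\lambda,$$
for any $p$-caloric $\phi$ on $G_{1/2}$ with a controlled $L^p$ average. Next, I would set
$$\delta \;:=\; \tfrac12\,\lambda^{\alpha}\,(2\lambda)^{(n+\theta)/p},$$
and invoke Lemma~\ref{compactness} with this $\delta$ to obtain the corresponding $\epsilon$ and a $p$-caloric function $\phi$ on $G_{1/2}$ with $\|u-\phi\|_{p,avg,G_{1/2}}\le \delta$. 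The candidate constant is $c_0:=\phi(0,0)$. Its universal boundedness follows from standard $L^\infty$ bounds for $p$-caloric functions: since $\|u\|_{p,avg,G_{1/2}}\le 2^{(n+\theta)/p}\|u\|_{p,avg,G_1}$ is universally controlled, so is $\|\phi\|_{p,avg,G_{1/2}}\le \|u\|_{p,avg,G_{1/2}}+\delta$, and hence so is $|\phi(0,0)|$.

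Rescaling the $L^p$ average using $|G_r|=\omega_n r^{n+\theta}$ produces
$$\|u-\phi\|_{p,avg,G_\lambda}\;\le\; \left(\frac{|G_{1/2}|}{|G_\lambda|}\right)^{\!1/p}\!\delta \;=\; (2\lambda)^{-(n+\theta)/p}\delta \;=\; \tfrac12\lambda^{\alpha},$$
and the triangle inequality combined with the pointwise control of $\phi-c_0$ on $G_\lambda$ closes the argument:
$$\|u-c_0\|_{p,avg,G_\lambda}\;\le\;\|u-\phi\|_{p,avg,G_\lambda}+\sup_{G_\lambda}|\phi-\phi(0,0)|\;\le\;\tfrac12\lambda^{\alpha}+\tfrac12\lambda^{\alpha}\;=\;\lambda^{\alpha}.$$
The main obstacle is not the iteration itself but the underlying black box: the quantitative, universal regularity of $p$-caloric functions on intrinsic cylinders (Lipschitz-in-space and $C^{0,1/2}$-in-time, with constants depending only on $n,p$ and an $L^p$ bound). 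Once that is in place, the delicate balancing between $\lambda$, $\lambda^{\theta/2}$ and $\lambda^{\alpha}$ built into the definition of $\theta$ does the work, and everything else is bookkeeping of the approximation parameters $\delta$ and $\epsilon$.
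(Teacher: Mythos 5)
Your proposal is correct and follows essentially the same route as the paper: approximate $u$ by a $p$-caloric $\phi$ via Lemma~\ref{compactness}, use the universal Lipschitz-in-space/$C^{0,1/2}$-in-time regularity to control $\phi-\phi(0,0)$ on $G_\lambda$, set $c_0=\phi(0,0)$, and fix $\lambda$ first and $\delta$ (hence $\epsilon$) afterwards. In fact your bookkeeping is slightly more careful than the paper's, since you explicitly absorb the volume ratio $(|G_{1/2}|/|G_\lambda|)^{1/p}$ into the choice of $\delta$ when passing from the averaged norm on $G_{1/2}$ to that on $G_\lambda$, a step the paper glosses over.
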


\proof

Take $0<\delta <1$, to be chosen later, and apply Lemma \ref{compactness} to obtain $0< \epsilon \ll 1$ and a $p-$caloric function $\phi$ in $G_{1/2}$, such that 
$$\| u-\phi \|_{p,avg,G_{1/2}}  \leq \delta.$$
Observe that 
\begin{equation} \label{cirne}
\| \phi \|_{p,avg,G_{1/2}} \leq \| u-\phi \|_{p,avg,G_{1/2}} + \| u \|_{p,avg,G_{1}} \leq \delta +1 \leq 2.
\end{equation}
Since $\phi$ is $p-$caloric, it follows from standard theory that $\phi$ is universally $C_\text{loc}^{0,\frac{1}{2}}$ in time and $C_\text{loc}^{0,1}$ in space.  That is, for $\lambda \ll 1$, to be chosen soon, we have    
$$
	\sup_{(x,t) \in G_\lambda} | \phi(x,t)-\phi (0,0) | \leq  C \, \lambda,
$$
for $C>1$ universal. In fact, for $(x,t) \in G_\lambda$,
\begin{eqnarray*}
 | \phi(x,t)-\phi (0,0) | & \leq &  | \phi(x,t)-\phi (0,t) | +  | \phi(0,t)-\phi (0,0) |\\
 & \leq & C^\prime\, |x-0| + C^{\prime \prime} \, |t-0|^{\frac{1}{2}} \\
 & \leq & C^\prime \, \lambda + C^{\prime \prime} \, \lambda^{ \frac{\theta}{2}} \leq C\, \lambda,
\end{eqnarray*} 
since $\theta >2$. We can therefore estimate
\begin{eqnarray}
	\| u(x,t) - \phi (0,0) \|_{p,avg,G_{\lambda}} & \leq &  \|u(x,t)-\phi(x,t)\|_{p,avg,G_{\lambda}}  \nonumber \\
	&+& \|\phi(x,t) - \phi(0,0)\|_{p,avg,G_{\lambda}} \nonumber \\
& \leq & \delta + C\, \lambda. \label{beach park}
\end{eqnarray}
Note that we will choose $\lambda \ll 1/2$ and thus
$$G_{\lambda} =  (-\lambda^{\theta}, 0) \times  B_\lambda \subset  (-(1/2)^{\theta}, 0) \times  B_{1/2} = G_{1/2}.$$

We put $c_0 := \phi(0,0)$, observing that, due to \eqref{cirne} and the fact that $\phi$ is $p-$caloric, $c_0$ is universally bounded.
The next step is to fix the constants. We choose $\lambda \ll 1/2$ so small that
$$C\, \lambda \leq \frac{1}{2} \lambda^\alpha$$
and then we define
$$\delta = \frac{1}{2} \lambda^\alpha$$
thus fixing, via Lemma \ref{compactness}, also $\epsilon >0$.
The lemma now follows from estimate \eqref{beach park} with the indicated choices.
\qed

\bigskip

Our next step accounts to iterating Lemma \ref{constant} in the appropriate geometric scaling. 

\begin{theorem} \label{iteration}
Under the conditions of the previous lemma, there exists a convergent sequence of real numbers $\{c_k\}_{k\ge 1}$, with
\begin{equation} \label{const}
|c_k - c_{k+1}| \leq c(n,p) \left( \lambda^\alpha \right)^k,
\end{equation}
such that
\begin{equation} \label{conclusion}
\|u-c_k\|_{p,avg,G_{\lambda^{k}}} \leq \left(\lambda^k\right)^\alpha.
\end{equation}
\end{theorem}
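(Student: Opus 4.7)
The plan is to prove Theorem \ref{iteration} by induction on $k$, where Lemma \ref{constant} supplies the base case $k=1$ (with $c_1 := c_0$). The inductive step is a rescaling argument tailored to the intrinsic geometry of the $p$-parabolic operator.

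Assuming we have constructed $c_1,\dots,c_k$ satisfying \eqref{const} and \eqref{conclusion}, define the rescaled function
\[
v(y,s) := \lambda^{-k\alpha}\bigl[\,u(\lambda^k y,\lambda^{k\theta} s) - c_k\,\bigr], \qquad (y,s)\in G_1.
\]
The change of variables gives $\|v\|_{p,avg,G_1} = \lambda^{-k\alpha}\|u-c_k\|_{p,avg,G_{\lambda^k}} \le 1$ by the inductive hypothesis. A routine chain-rule computation, combined with the specific choice $\theta = p-(p-2)\alpha$, shows that $v$ is a weak solution of
\[
v_t - \mathrm{div}\bigl(|\nabla v|^{p-2}\nabla v\bigr) = \tilde f \quad \text{in } G_1,
\qquad \tilde f(y,s) := \lambda^{k(\theta-\alpha)}\,f(\lambda^k y,\lambda^{k\theta} s);
\]
here the power $\theta-\alpha$ is precisely what makes the lower-order time term and the $p$-Laplacian term scale by the same factor, which in turn explains the definition \eqref{theta} of $\theta$.

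The crucial step is to check that $\|\tilde f\|_{L^{q,r}(G_1)} \le \epsilon$. Computing the norm under the change of variables gives
\[
\|\tilde f\|_{L^{q,r}(G_1)} = \lambda^{k\left[\theta(1-1/r)-n/q-\alpha\right]}\,\|f\|_{L^{q,r}(G_{\lambda^k})}.
\]
A direct manipulation starting from \eqref{alpha} shows that the exponent $\theta(1-1/r)-n/q-\alpha$ vanishes identically — this is the algebraic identity behind the definition of the sharp exponent $\alpha$. Hence $\|\tilde f\|_{L^{q,r}(G_1)} \le \|f\|_{L^{q,r}(G_1)} \le \epsilon$, and Lemma \ref{constant} applies to $v$, producing a universally bounded constant $\tilde c$ with $\|v-\tilde c\|_{p,avg,G_\lambda}\le \lambda^\alpha$. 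Setting $c_{k+1} := c_k + \lambda^{k\alpha}\tilde c$ and scaling back yields \eqref{conclusion} at level $k+1$, while $|c_{k+1}-c_k| = \lambda^{k\alpha}|\tilde c| \le c(n,p)\lambda^{k\alpha}$ proves \eqref{const}; the geometric summability of this bound then gives convergence of $\{c_k\}$.

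The only real obstacle is verifying the scale-invariance of the source norm, i.e.\ showing that the exponent $\theta(1-1/r)-n/q-\alpha$ is zero. This is the algebraic heart of the argument and the reason the exponents $\alpha$ and $\theta$ take the precise forms given in \eqref{alpha} and \eqref{theta}; everything else (the chain-rule computation for the equation and the induction bookkeeping for the constants $c_k$) is structural.
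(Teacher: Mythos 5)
Your proposal is correct and follows essentially the same route as the paper: induction on $k$, the intrinsic rescaling $v = \lambda^{-k\alpha}[u(\lambda^k\cdot,\lambda^{k\theta}\cdot)-c_k]$, the verification that the source scales by $\lambda^{k[\theta(1-1/r)-n/q-\alpha]}$ (which indeed equals the paper's exponent $\{[p-(p-1)\alpha]q-n\}\tfrac{r}{q}-\theta$ per unit $k$, and vanishes by the definition of $\alpha$), and the update $c_{k+1}=c_k+\lambda^{k\alpha}\tilde c$. The only difference is cosmetic bookkeeping in how the scaling exponents are written.
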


\proof

The proof is by induction on $k \in \N$. For $k=1$, \eqref{conclusion} holds due to Lemma \ref{constant}, with $c_1=c_0$. 
Suppose the conclusion holds for $k$ and let's show it also holds for $k+1$. We start by defining the function $v \colon G_{1} \to \mathbb{R}$ by
\begin{equation}\label{def v}
	v(x,t) = \frac{u(\lambda^k x, \lambda^{k \theta} t) - c_k}{\lambda^{\alpha k}}.
\end{equation}
We compute 
$$v_t(x,t)=\lambda^{k \theta - \alpha k} u_t(\lambda^k x, \lambda^{k \theta} t)$$
and
$$\hspace*{-5cm} \mathrm{div} \left( |\nabla v(x,t)|^{p-2} \nabla v(x,t) \right) = $$
\vspace*{-0.2cm}
$$\hspace*{3cm} = \lambda^{pk-(p-1) \alpha k} \mathrm{div} \left( |\nabla u(\lambda^k x, \lambda^{k \theta} t)|^{p-2} \nabla u(\lambda^k x, \lambda^{k \theta} t) \right)$$
to conclude, recalling \eqref{theta}, that
$$v_t - \mathrm{div} \left( |\nabla v|^{p-2} \nabla v \right) = \lambda^{pk-(p-1) \alpha k} f(\lambda^k x, \lambda^{k \theta} t) = \tilde{f} (x,t).$$
We now compute
\begin{eqnarray}
\| \tilde{f} \|_{L^{q,r} (G_1)}^r &=& \int_{-1}^0 \left( \int_{B_1}  |\tilde{f} (x,t)|^{q} dx \right)^{r/q}  dt  \label{bound of f} \\
& = & \int_{-1}^0  \left(  \int_{B_1}\lambda^{[pk-(p-1) \alpha k]q} |f(\lambda^k x, \lambda^{k \theta} t)|^{q} dx\right)^{r/q} dt \nonumber \\
& = & \int_{-1}^0  \left(  \int_{B_{\lambda^k}}\lambda^{[pk-(p-1) \alpha k]q-kn} |f( x, \lambda^{k \theta} t)|^{q} dx\right)^{r/q} dt \nonumber \\
& = & \lambda^{\{[pk-(p-1) \alpha k]q-kn\}\frac{r}{q}} \int_{-1}^0  \left(  \int_{B_{\lambda^k}} |f( x, \lambda^{k \theta} t)|^{q} dx\right)^{r/q} dt \nonumber \\
& = & \lambda^{\{[pk-(p-1) \alpha k]q-kn\}\frac{r}{q}-k \theta} \int_{-\lambda^{k \theta}}^0  \left(  \int_{B_{\lambda^k}} |f( x,  t)|^{q} dx\right)^{r/q} dt. \nonumber
\end{eqnarray}
Due to the crucial and sharp choice \eqref{alpha} of $\alpha$, we have, recalling again \eqref{theta},
$$\{[pk-(p-1) \alpha k]q-kn\}\frac{r}{q}-k \theta= 0.$$
We go back to \eqref{bound of f} to conclude
$$\| \tilde{f} \|_{L^{q,r} (G_1)} \leq \| f \|_{L^{q,r} ((-\lambda^{k \theta},0)\times B_{\lambda^k})} \leq \| f \|_{L^{q,r} (G_1)}\leq \epsilon,$$
which entitles $v$ to Lemma  \ref{constant} (note that $\|v\|_{p,avg,G_{1}}  \le 1$, due to the induction hypothesis). 

It then follows that there exists a constant $\tilde{c_0}$, with $|\tilde{c_0}| \leq c(n,p)$, such that  
$$\| v -\tilde{c_0}  \|_{p,avg,G_{\lambda} } \leq \lambda^\alpha,$$
which is the same as 
$$\| u -c_{k+1}  \|_{p,avg,G_{\lambda^{k+1}}} \leq \lambda^{\alpha (k+1)},$$
for $c_{k+1} := c_k +\tilde{c_0} \lambda^{\alpha k} $; the induction is complete. We readily observe that 
$$\left| c_{k+1} - c_k \right| \leq  c(n,p) \left(\lambda^{\alpha}\right)^k, $$
thus obtaining also \eqref{const}.
\qed

\bigskip

\begin{theorem} \label{holder}
A locally bounded weak solution of \eqref{p-Lap}, with $f \in L^{q,r}$, satisfying \eqref{borderline1}--\eqref{borderline2}, is locally H\"older continuous in the space variables, with exponent 
$$\alpha = \frac{(pq-n)r-pq}{q[(p-1)r-(p-2)]}$$ 
and locally H\"older continuous in time with exponent $\frac{\alpha}{\theta}$. In addition, there exists a constant $C$, that depends only on $p, n$, $\|f\|_{q,r}$ and $\|u\|_{p, \text{avg}, G_1}$, such that
$$
	\|u\|_{C^{0;\alpha,\alpha/\theta}(G_{1/2})} \le C . 
$$
\end{theorem}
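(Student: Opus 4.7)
The plan is to upgrade the discrete, scale-by-scale estimate of Theorem \ref{iteration} to a Morrey-type decay at every scale and every base point, and then extract pointwise H\"older regularity by a parabolic Campanato embedding in the $\theta$-intrinsic geometry. The argument splits into a normalization, a Morrey decay, and a pointwise comparison.

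First I would reduce a general solution to the regime of Theorem \ref{iteration}. Fix an interior point $z_0=(x_0,t_0)$ of $G_1$, translate it to the origin, and rescale
$$\tilde u(x,t) := \kappa\,u(x_0 + \rho x,\,t_0 + \rho^\theta t).$$
A computation identical to the one inside the proof of Theorem \ref{iteration} --- which crucially uses the sharp choices of $\alpha$ and $\theta$ --- shows that $\tilde u$ solves a $p$-parabolic equation with source $\tilde f(x,t) = \kappa\,\rho^p\,f(x_0 + \rho x,\,t_0 + \rho^\theta t)$, whose $L^{q,r}$ norm over $G_1$ is bounded by $\|f\|_{L^{q,r}}$ times a nonnegative power of $\rho$. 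Choosing $\rho$ and $\kappa$ small enough in terms of $p$, $n$, $\|f\|_{q,r}$, and $\|u\|_{p,avg,G_1}$ --- and uniformly in $z_0 \in G_{1/2}$ --- brings $\tilde u$ into the hypotheses $\|\tilde u\|_{p,avg,G_1}\le 1$ and $\|\tilde f\|_{L^{q,r}(G_1)}\le \varepsilon$ of Theorem \ref{iteration}, which then supplies a sequence $\{c_k(z_0)\}_{k\ge 1}$ verifying \eqref{const} and \eqref{conclusion}.

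Second, the geometric bound \eqref{const} makes $\{c_k(z_0)\}$ Cauchy, with limit $c_\infty(z_0)$ satisfying $|c_k(z_0) - c_\infty(z_0)| \le C(n,p)\,\lambda^{k\alpha}/(1-\lambda^\alpha)$. Combined with \eqref{conclusion} via the triangle inequality this yields the dyadic decay $\|\tilde u - c_\infty(z_0)\|_{p,avg,G_{\lambda^k}}\le C\,\lambda^{k\alpha}$. For arbitrary $r\in(0,1]$, choosing $k$ so that $\lambda^{k+1}<r\le\lambda^k$ and noting that $G_r\subset G_{\lambda^k}$ with comparable Lebesgue measures upgrades this to the continuous Morrey decay
$$\left(\,\intav{G_r} |\tilde u - c_\infty(z_0)|^p\,dx\,dt\right)^{1/p}\le C\,r^\alpha,$$
and undoing the scaling transfers the estimate to $u$ on the intrinsic cylinder of radius proportional to $\rho r$ centered at $z_0$.

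Finally, since this Morrey decay holds at every base point $z_0\in G_{1/2}$ with a constant uniform in $z_0$, the parabolic Campanato embedding in the $\theta$-intrinsic metric $d_\theta((x,t),(y,s)) := \max\{|x-y|,\,|t-s|^{1/\theta}\}$ identifies $c_\infty(\cdot)$ with the continuous representative of $u$ and yields
$$|u(z_1)-u(z_2)| \le C\,d_\theta(z_1,z_2)^\alpha,\qquad z_1,z_2\in G_{1/2},$$
which is exactly H\"older continuity with exponent $\alpha$ in space and $\alpha/\theta$ in time. The \emph{main obstacle}, in my view, is securing the uniformity in $z_0$ during the normalization: one must verify that $G_\rho(z_0)\subset G_1$ for every $z_0\in G_{1/2}$ with $\rho$ independent of $z_0$ and that the rescaled source keeps its $L^{q,r}$ norm uniformly small. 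Tracking the dependence of $\rho$ and $\kappa$ through the reduction then delivers the quantitative bound on $\|u\|_{C^{0;\alpha,\alpha/\theta}(G_{1/2})}$ claimed in the theorem.
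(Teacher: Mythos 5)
Your overall architecture is the same as the paper's: normalize into the smallness regime of Theorem \ref{iteration}, convert the dyadic estimate \eqref{conclusion} plus the Cauchy property from \eqref{const} into a Morrey--Campanato decay $\intav{G_r}|u-\bar c|^p\le Cr^{p\alpha}$ at every scale and base point, and conclude by the Campanato--Da Prato characterisation of H\"older continuity in the $\theta$-intrinsic metric (the paper compresses your steps two and three into ``standard covering arguments'' and a reference to \cite[Lemma 3.2]{T1}). Your steps two and three are fine, and you are right that the uniformity in $z_0$ is the point one must watch.

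The genuine problem is in your normalization. If you rescale by $\tilde u(x,t)=\kappa\,u(x_0+\rho x,\,t_0+\rho^\theta t)$ and insist that $\tilde u$ again solve the $p$-parabolic equation, then matching $\tilde u_t=\kappa\rho^\theta u_t$ against $\mathrm{div}(|\nabla\tilde u|^{p-2}\nabla\tilde u)=\kappa^{p-1}\rho^{p}\,\mathrm{div}(|\nabla u|^{p-2}\nabla u)$ forces $\kappa^{p-2}=\rho^{\theta-p}=\rho^{-(p-2)\alpha}$, i.e.\ $\kappa=\rho^{-\alpha}>1$: $\kappa$ and $\rho$ are \emph{not} independent small parameters. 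With this forced choice, $\|\tilde u\|_{p,avg,G_1}=\rho^{-\alpha}\|u\|_{p,avg,G_\rho(z_0)}$ blows up rather than dropping below $1$, and the ``nonnegative power of $\rho$'' multiplying the source norm is \emph{exactly zero} --- that is precisely the content of the computation in Theorem \ref{iteration}, where the sharp choice \eqref{alpha} of $\alpha$ makes the exponent vanish --- so $\|\tilde f\|_{L^{q,r}(G_1)}=\|f\|_{L^{q,r}(G_\rho(z_0))}$ and no quantitative smallness in terms of $\|f\|_{q,r}$ alone is gained. If instead you let $\kappa$ and $\rho$ be independent and small, the rescaled function solves an equation with a large constant in front of the diffusion term, and Theorem \ref{iteration} no longer applies. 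The paper resolves this by using a deliberately \emph{non-intrinsic} two-parameter scaling, $v(x,t)=\varrho\, u(\varrho^a x,\varrho^{(p-2)+ap}t)$, which preserves the equation for every $a$, and then choosing $a<\tfrac{2}{n+p}$ with $[(p-1)+ap]r-a(n+p)-(p-2)>0$ so that both $\|v\|_{p,avg,G_1}$ and $\|\tilde f\|_{L^{q,r}(G_1)}$ carry strictly positive powers of $\varrho$; a single such normalization (done once, not per base point) then places every translate into the hypotheses of Theorem \ref{iteration}. Replace your first step by this scaling and the rest of your argument goes through.
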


\proof
We start by observing (see also \cite[section 7]{ART}) that the smallness regime required in the assumptions of Theorem \ref{iteration} is not restrictive since we can fall into that framework by scaling and contraction. Indeed, given a solution $u$, let 
$$v(x,t) = \varrho u ( \varrho^a x, \varrho^{(p-2) + ap} t)$$
($\varrho$, $a$ to be fixed), which is a solution of \eqref{p-Lap} with 
$$ \tilde{f} (x,t) =  \varrho^{(p-1) + ap} f ( \varrho^a x, \varrho^{(p-2) + ap} t).$$
We choose $a>0$ such that 
$$a < \frac{2}{n+p} \qquad \textrm{and} \qquad [(p-1)+ap] r - a (n+p) - (p-2) > 0,$$
which is always possible (observe that the second condition holds for $a=0$ and use its continuity with respect to $a$), and then $0<\varrho <1$ such that
$$\| v \|_{p,avg,G_{1}}^p \leq \varrho^{2-a(n+p)} \|u\|_{p,avg,G_1}^p \leq 1$$
and
$$\| \tilde{f} \|_{L^{q,r} (G_1)}^r = \varrho^{[(p-1)+ap] r - a (n+p) - (p-2)} \| f \|_{L^{q,r} (G_1)}^r \leq \epsilon^r.$$

Due to \eqref{const}, the sequence $\{c_k\}_{k\ge 1}$ is convergent and we put
$$\bar{c} := \lim_{k \to \infty} c_k.$$
It follows from \eqref{conclusion} that, for arbitrary $0<r<\frac{1}{2}$,
$$ \intav{G_r} |u -\bar{c}  |^p \, dx dt  \leq C r^{p\alpha}.$$
Standard covering arguments, a remark in \cite[Lemma 3.2]{T1} and the characterisation of H\"older continuity of Campanato-Da Prato give the local $C^{0;\alpha, \alpha/\theta}$ -- continuity and thus the result.
\qed

\section{Generalizations and beyond} \label{Sct final}

The ideas and methods employed in this paper only explore the degenerate $p-$structure of the operator. The underlying heuristics is to interpret the homogeneous problem as the geometric tangential equation of its inhomogeneous counterpart, for small perturbations $f \in L^{r,q}$, $\|f\|_{r,q} \ll 1$. The proofs adapt to more general degenerate parabolic equations 
\begin{equation} \label{GeneralEq}
	u_t - \text{div}\, \mathscr{A}(x,t,Du)  = f \in L^{r,q}
\end{equation}
satisfying the usual structure assumptions for $p\geq 2$.

We briefly comment on the modifications required. Lemma \ref{energy est} is based on pure energy considerations, thus the very same proof works in the general case. Lemma \ref{compactness} can be carried out universally in the structural class of operators, provided integrability bounds for the time-derivative are available (cf. \cite[section 7]{AMS}, where a more general version of the result in \cite{Lindq} on this issue is proved).
As for Lemma  \ref{constant}, the very same proof works since solutions to the general homogeneous equation are also Lipschitz in space and $C^{0;1/2}$ in time. The only modification occurs when we iterate Lemma \ref{constant}. The rescaled function $v$ defined in \eqref{def v} now solves the equation
$$
	v_t - \mathrm{div}\, \mathscr{A}_k(x,t,Dv) = \lambda^{pk-(p-1) \alpha k} f(\lambda^k x, \lambda^{k \theta} t),
$$
where
$$
	\mathscr{A}_k(x,t,\xi) := \left (\lambda^{-\alpha k}\right )^{1-p} \mathscr{A}(\lambda^k x, \lambda^{\theta k}t, \lambda^{-\alpha k}\xi)
$$
belongs to the same structural class of $\mathscr{A}$. In particular, $v$ is entitled to the conditions of Lemma \ref{constant} and the proof then follows exactly as before.

\bigskip
We would like to conclude by explaining how the  idea of finding geometrical tangential equations can be employed to derive analytical tools for $p-$parabolic operators, continuously on $p$. For instance, one can access regularity estimates for degenerate parabolic equations by interpreting the heat operator as the tangential equation obtained when we differentiate the family of $p-$parabolic operators with respect to the exponent $p$, near $p=2$. 

It is possible to obtain a universal compactness device. 
Let $Q_\tau := I_\tau \times B_\tau = \left( -\tau, \tau \right) \times B_{\tau}$. We fix $M_0 \gg 2$ and work within the range $p \in [2, M_0]$. 

\begin{lemma}[Uniform in $p$ compactness] \label{unif compac}
Given $\delta >0$, there exists $\epsilon>0$, depending only on  $n$, $M_0$ and $\delta$, such that if $q\in [2, M_0]$, $u$ is a $q-$caloric function in $Q_1$, with $|u| \leq 1$, and $|q-p| < \epsilon$, then we can find a $p-$caloric function $w$ in $Q_{1/2}$, with $|w|\leq 1$, such that
\begin{equation} \label{closy}
	\sup_{Q_{\frac{1}{2}}} |w-u|  \leq \delta.
\end{equation}
\end{lemma}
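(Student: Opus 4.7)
\emph{Plan.} I would argue by contradiction, in the spirit of Lemma \ref{compactness}, but now with the nonlinearity itself varying along the approximating sequence. Suppose the conclusion fails; then there exist $\delta_0>0$ and sequences $q_j, p_j \in [2, M_0]$ with $|q_j - p_j| \to 0$, together with $q_j$-caloric functions $u_j$ in $Q_1$ satisfying $|u_j|\le 1$, such that no $p_j$-caloric function $w$ on $Q_{1/2}$ with $|w|\le 1$ meets $\sup_{Q_{1/2}}|w-u_j| \le \delta_0$. After passing to a subsequence we may assume $q_j \to q_\infty \in [2, M_0]$ and hence also $p_j \to q_\infty$.

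The second step is to extract a uniform limit for the $u_j$. Caccioppoli-type energy estimates for $q_j$-caloric functions, with $q_j \in [2, M_0]$ and $|u_j|\le 1$, give uniform bounds in $L^{\infty}(I_{3/4}; L^2(B_{3/4})) \cap L^{q_j}(I_{3/4}; W^{1,q_j}(B_{3/4}))$, and hence in $L^\infty(I_{3/4}; L^2) \cap L^2(I_{3/4}; W^{1,2})$ by H\"older's inequality. DiBenedetto's intrinsic H\"older theory, whose constants can be tracked uniformly in $q \in [2, M_0]$, together with a dual-norm bound on $(u_j)_t$ along the lines of \cite{Lindq, AMS}, supplies the compactness needed to invoke Arzel\`a--Ascoli. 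On a further subsequence, $u_j \to u_\infty$ uniformly on $\overline{Q_{3/4}}$, and $\nabla u_j \rightharpoonup \nabla u_\infty$ weakly in $L^2$. A Minty monotonicity argument, exploiting the monotonicity of $\xi \mapsto |\xi|^{s-2}\xi$ for $s\ge 2$ together with the joint continuity of this map in $(\xi, s)$, identifies the limit $u_\infty$ as $q_\infty$-caloric in $Q_{3/4}$.

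Finally, I would let $w_j$ be the unique $p_j$-caloric function on $Q_{1/2}$ agreeing with $u_j$ on the parabolic boundary $\partial_p Q_{1/2}$; the parabolic maximum principle gives $|w_j|\le 1$. The same Minty device, applied this time to the Dirichlet problem with varying exponent and uniformly convergent boundary data $u_j|_{\partial_p Q_{1/2}} \to u_\infty|_{\partial_p Q_{1/2}}$, forces $w_j \to u_\infty$ uniformly on $\overline{Q_{1/2}}$. Combining this with $u_j \to u_\infty$ yields $\sup_{Q_{1/2}}|w_j - u_j| \to 0$, contradicting the standing assumption. The technical heart of the argument, and the only place where one must work harder than in Lemma \ref{compactness}, is precisely the passage to the limit as the exponent $q_j \to q_\infty$: namely, showing that $|\nabla u_j|^{q_j-2}\nabla u_j$ converges weakly to $|\nabla u_\infty|^{q_\infty-2}\nabla u_\infty$, which is exactly what the monotonicity of the nonlinearity and its continuity in the exponent are there to deliver.
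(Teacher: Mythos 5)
Your proposal follows the same contradiction--compactness--stability scheme as the paper: extract $q_j,p_j\to q_\infty$, pass to a limit $u_\infty$ which is $q_\infty$-caloric, and compare $u_j$ with an auxiliary $p_j$-caloric function via the triangle inequality. Two details differ, and the paper's choices are slightly more economical. First, for the identification of the limit equation as the exponent varies, you propose a Minty monotonicity argument exploiting continuity of $\xi\mapsto|\xi|^{s-2}\xi$ in $(\xi,s)$; the paper instead simply invokes the stability result of Kinnunen--Parviainen \cite{KP}, which packages exactly this step. Your route is self-contained but note that Minty identifies the weak limit of the flux; it does not by itself produce the \emph{uniform} convergence you later claim -- for that you still need the uniform-in-$q\in[2,M_0]$ interior H\"older estimates you mention (and the paper needs them too, since the contradiction hypothesis is in the sup norm). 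Second, and more substantively, you define $w_j$ as the $p_j$-caloric function on $Q_{1/2}$ with boundary data $u_j$, and then need $w_j\to u_\infty$ uniformly on $\overline{Q_{1/2}}$, i.e.\ up to the parabolic boundary; this requires a boundary modulus of continuity for the Dirichlet problem that is uniform in $p_j$ (barriers), an extra layer of machinery. The paper avoids this by solving the boundary value problem on the strictly larger cylinder $Q_{2/3}$ with the \emph{fixed} boundary datum $u_\infty$, so that only interior estimates on $Q_{1/2}\subset\subset Q_{2/3}$ are needed and only the exponent varies in the limit. Your argument can be completed, but you should either adopt the larger-cylinder device or supply the uniform boundary regularity explicitly.
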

\begin{proof} 
Suppose, for the sake of contradiction, that the thesis of the lemma does not hold true. This means that for a certain $\delta_0 >0$, there exist sequences $(q_j)_j$, $(u_j)_j$ and $(p_j)_j$, with
\begin{equation} \label{lad}
\left \{
\begin{array}{l}
q_j \in [2,M_0];\\
\\
(u_j)_t - \mathrm{div} \left( |\nabla u_j|^{q_j-2} \nabla u_j \right) = 0  \quad \textrm{in} \ \ Q_1;\\
\\
|u_j|\leq 1;\\
\\
|p_j-q_j| \leq \frac{1}{j}
\end{array}
\right.
\end{equation}
but such that, for every $p_j-$caloric function $w$ in $Q_{\frac{1}{2}}$,
\begin{equation} \label{hanninha}
\sup_{Q_{\frac{1}{2}}} |u_j-w|  > \delta_0.
\end{equation}
By compactness, we have, up to subsequences,
\begin{equation} \label{gangnam}
q_j \rightarrow q_\infty \in [2, M_0]
\end{equation}
and, from the last assertion in \eqref{lad}, also $p_j \rightarrow q_\infty $.
As in the proof of Lemma  \ref{compactness},  up to a subsequence, $u_j \rightarrow u_\infty$ in the appropriate space. Since $q_j  \rightarrow q_\infty$, by stability (cf. \cite{KP}), we can pass to the limit in the equation satisfied by the $u_j$ to conclude that $u_\infty$ is $q_\infty-$caloric in $Q_{\frac{2}{3}}$.

We now solve, for each $p_j$, the following boundary value problem
\begin{equation} \label{bvp}
\left \{
\begin{array}{rcl}
(w_j)_t - \mathrm{div} \left( |\nabla w_j|^{p_j-2} \nabla w_j \right) = 0  & \textrm{in}  & Q_{\frac{2}{3}}\\
\\
w_j = u_\infty & \textrm{on}  & \partial Q_{\frac{2}{3}}\\
\end{array}
\right.
\end{equation}
and pass to the limit in $j$, concluding that also $w_j \rightarrow u_\infty$ uniformly in $ Q_{\frac{1}{2}}$.

Finally, choosing $j$ sufficiently large, we obtain
$$|u_j - w_j| \leq |u_j - u_\infty| + |w_j - u_\infty| \leq \frac{\delta_0}{2} + \frac{\delta_0}{2} = \delta_0 \quad \textrm{in} \ \ Q_{\frac{1}{2}}$$
which is a contradiction to \eqref{hanninha}.
\end{proof}

Heuristically, Lemma \ref{unif compac} implies the continuity of the underlying regularity theory for $p-$parabolic operators with respect to $p$. In particular, improved sharp H\"older estimates can be derived by these methods for problems governed by $p-$parabolic operators, near the heat equation, \textit{i.e.}, for $p$ close to 2. We leave the development of these heuristics for a future work.

\bigskip

\noindent \textbf{Acknowledgements.} This work was developed in the framework of the Brazilian Program "Ci\^encia sem Fronteiras". The authors thank Giuseppe Mingione and Rico Zacher for interesting conversations on the topic of the paper, and acknowledge the warm hospitality of Universidade Federal do Cear\'a and Universidade de Coimbra. 

Research of JMU partially supported by projects UTAustin/MAT/0035/2008, PTDC/MAT/098060/2008 and  UTA-CMU/MAT/0007/2009 and by the Centro de Matem\'ati\-ca da Universidade de Coimbra (CMUC), funded by the European Regional Development Fund through the program COMPETE and by the Funda\c c\~ao para a Ci\^encia e a Tecnologia (FCT) under the project PEst-C/MAT/UI0324/2011. Research of EVT partially supported by CNPq--Brazil.

\bigskip

\bigskip

\vspace{1cm}

\noindent \textsc{Eduardo V. Teixeira} \hfill \textsc{Jos\'e Miguel Urbano}\\
Universidade Federal do Cear\'a \hfill  CMUC, Department of Mathematics \\
Campus of Pici - Bloco 914 \hfill University of Coimbra \\
Fortaleza - Cear\'a - Brazil  \hfill  3001--501 Coimbra  \\
60.455-760 \hfill Portugal \\
\texttt{teixeira@mat.ufc.br} \hfill \texttt{jmurb@mat.uc.pt}

\end{document}